\newcommand{\R}{{\mathbb R}}
\newcommand{\K}{{\mathbb K}}
\newcommand{\m}{{\mu}}
\newcommand{\cq}{{\widehat{q}}}
\newcommand{\tc}{{\widetilde{c}}}
\newtheorem{remark}{Remark}[section]
\newtheorem{theorem}{Theorem}[section]
\newtheorem{lemma}[theorem]{Lemma}
\newtheorem{proposition}{Proposition}[section]
\numberwithin{equation}{section}
\def\vs1{\vspace{1ex}}
\def\O{\Omega}
\def\pa{\partial}
\def\dy{\displaystyle}
\def\be{\begin{equation}}
\def\ba{\begin{array}}
\def\ea{\end{array}}
\def\ee{\end{equation}}
\begin{document}
\title{\bf\large Singular parabolic p-Laplacian systems under non-smooth external forces. Regularity up to the boundary.}
\author{ H.~Beir\~ao da Veiga}

\date{}
\maketitle
\begin{abstract} We study the regularity of the solutions to initial-boundary
value problems for $N-$systems of the p-Laplacian type, in
$\,n\geq\,3\,$ space variables, with square-integrable external
forces in the space-time cylinder. So, the ellipticity coefficient
remains unbounded. The singular case $\mu=0\,$ is covered.
\end{abstract}

\vspace{0.2cm}

\noindent \textbf{Keywords:} Initial-boundary value problems,
p-Laplacian parabolic singular systems, regularity up to the
boundary.


\section{Introduction and main result.}
In the sequel we consider the evolution problem
\begin{equation}\label{NSC-velh}\left\{
\begin{array}{ll}\vspace{1ex}
\pa_t\,u -\,\nabla \cdot
\,\big(\,(\,\m+|\,\nabla\,u|\,)^{p-2}\,\nabla\,u\,\big)=\,f(t,\,x)\,, \
\mbox{ in } (0,\,T) \times\,\O\,,
\\%
u=\,0\ \mbox{ on } (0,\,T) \times\, \partial \O\,,
\\%
u(0)=\,u_0 \ \mbox{ in } \O\,,
\end{array}\right .
\end{equation}
where $\,p\in\,(1,\,2]\,,$ $T\in(0,\,\infty]\,,$ and $\mu\,\geq \,0$
are constants. Here $\,u\,$ is an $\,N-$dimensional vector field,
$\,N\geq\,1\,,$ defined in  $\,Q_T \equiv \,(0,\,T) \times\,\O\,$
where $\,\O \subset \R^n$, $\,n\geq 3\,,$ is a regular, bounded open
set. The main point here is that the external force $f$ is only
square-integrable in $\,Q_T\,$. This low integrability prevents
boundedness of $\,|\,\nabla\,u(t,x)|\,$ (which holds, for instance,
if $\,f \in \,L^q(Q_T)\,$, $\,q> \,n+\,2\,).$ So, the ellipticity
coefficient $\,(\,(\,\m+|\,\nabla\,u|\,)^{p-2}\,$ keeps unbounded.
This obstacle is here by-passed, due to a simple, but fruitful, idea
(which has a more wide range of application, as shown in a
forthcoming work).\par%
Let us illustrate the kind of results proved in the sequel, by the
following example. Assume that $\,p\,$ satisfies the condition
\eqref{oras}, where $\,K\,$ (see below) is a positive constant,
independent of $\,p\,$. Then, the second order space derivatives
satisfy
\begin{equation}\label{maxreg-2}%
D^2 u \in \,L^{2\,(p-\,1\,)}(0,\,T;\,L^{\cq}(\O)\,)\,,%
\end{equation}
where $\,\cq\,$ is defined by \eqref{errq2}.%

\vspace{0.2cm}

The proof of our main result appeals to a regularity theorem, see
the theorem \ref{teoremaq} below, proved in reference \cite{BVCRI}
for the stationary problem
\begin{equation}\label{NSC}\left\{
\begin{array}{ll}\vspace{1ex}
-\,\nabla \cdot
\,\big(\,(\,\m+|\,\nabla\,u|\,)^{p-2}\,\nabla\,u\,\big)=\,f \ \mbox{
in } \O\,,
\\%
u=\,0\ \mbox{ on } \partial \O\,.
\end{array}\right .
\end{equation}
Actually, we need the above regularity result for the value
$\,q=\,\cq<\,2\,,$ see \eqref{errq2} below. However, values smaller
then $\,2\,$ are out of the range considered in the statement of
theorem \ref{teoremaq}, even though the proof applies to a range of
values which includes $\,\cq<\,2\,.$ The check of this claim is
straightforward. However, for the readers convenience, after the
statement of the extension result to the $\,\cq<\,2\,$ case (see
proposition \ref{propmaq} below) we made a couple of comments
plentifully sufficient to adapt the proof given in \cite{BVCRI} to
the $\,\cq$ case. To minimize the number of alterations, assume that
\begin{equation}\label{bunov}\left\{
\begin{array}{ll}\vspace{1ex}
\frac{2\,n}{n+\,2}<\,p \leq\,2\,, \quad \ \mbox{ if }\, n>\,3\,,
\\%
\frac54 <\,p\,, \quad \mbox{ if }\,  n=\,3\,.
\end{array}\right .
\end{equation}
In particular, the inclusion
\begin{equation}\label{doispp}
\,L^2(\O)\,\subset \,\,W^{-1,\,p'}(\O)\
\end{equation}%
holds. Actually, the assumption \eqref{bunov} is not strictly
essential in the following.

\vspace{0.2cm}

We start by recalling that scalar multiplication of both sides of
\eqref{NSC-velh} by $\,u\,$, followed by classical manipulations,
lead to the well known a priori estimate
$$
\|\,u\,\|^2_{L^{\infty}(0,\,T;\,L^2(\O)\,)}\,
+\,\|\,u\,\|^p_{L^p(0,\,T;\,W^{1,\,p}(\O)\,)}
$$
$$
\leq\,c\,\big(\,\|\,u_0\,\|^2_{L^2(0,\,T;\,L^2(\O)\,)}+\,\|\,f\,\|^{p'}_{
L^{p'}(0,\,T;\,W^{-1,\,p'}(\O)\,)}\,\big).
$$
This estimate, useful in proving the existence of the weak
solution, lead us to assume that%
\begin{equation}\label{ppprimo}%
f\in L^{p'}(\,0,\,T;\,W^{-1,\,p'}(\O)\,)\,.
\end{equation}
For the existence of the above weak solution we refer the reader to
the Theorem 1.1, Chap. II, in \cite{lions}.

\vspace{0.2cm}

Let us introduce the core exponent
\begin{equation}\label{errq2}%
\cq=\,\frac{2\,n\,(\,p-\,1\,)}{n-\,2\,(\,2-\-p\,)}\,.
\end{equation}
As shown below, the central role of this exponent is due to the
particular relation
\begin{equation}\label{errq}%
r(\cq)=\,2\,,%
\end{equation}
see \eqref{rq}. Our assumptions on $\,p\,$ implies that
$\,\cq\in\,(1,\,2]\,,$ and also that the immersion
\begin{equation}\label{doispp}
\,W^{2,\,\cq}(\O)\,\subset \,\,W^{1,\,p}(\O)
\end{equation}%
is compact.\par%
Finally we recall the well known
inequality%
\be\label{ladaq}\|D^2\,v\|_{q}\leq \,C_2(q)\,\|\Delta v\|_q\,,\ee%
for $v\in W^{2,q}(\O)\cap W_0^{1,q}(\O)\,$. Actually, there is a
constant $K\,,$ independent of $q$, such that%
\be\label{yud} C_2(q) \leq\, K\, q\,,\ee%
at least for $\,q>\,\frac{2\,n}{n+\,2}\,$ (see \cite{yud}).\par%
Our main result is the following.
\begin{theorem}\label{isadeo}%
Let $\,p \,$ satisfy \eqref{bunov}, and define $\,\cq\,$ by
\eqref{errq2}. Further, assume that
\begin{equation}\label{kkapas}
\,(2-p)\,C_2(\cq)<\,1\,,
\end{equation}
where $C_2(\cq)\,$ is defined by \eqref{ladaq}. Let $\,u_0
\in\,W^{1,\,p}_0(\O)\,$ and assume that, for some $\,T \in
\,]\,0,\,+\,\infty\,]\,,$ $\,f\,$ satisfies \eqref{ppprimo} and
\begin{equation}\label{effes}%
f\in\,L^2(0,\,T;\,L^2(\O)\,)\,.
\end{equation}
Then the weak solution $\,u\,$ of problem \eqref{NSC-velh}
enjoys the following properties:%
\begin{equation}\label{zeras}
u \in \,L^\infty (0,\,T;\,W^{1,\,p}_0(\O)\,)\,,
\end{equation}
\begin{equation}\label{segas}%
\,\nabla \cdot \,\big(\,(\,\m+|\,\nabla\,u|\,)^{p-2}\,\nabla\,u\,
\big) \in\,L^2(0,\,T;\,L^2(\O)\,)\,,%
\end{equation}
\begin{equation}\label{tercas}%
\pa_t\,u \in\,L^2(0,\,T;\,L^2(\O)\,)\,,
\end{equation}
and
\begin{equation}\label{maxreg}%
u\in \,L^{2\,(p-\,1\,)}(0,\,T;\,W^{2,\,\cq}(\O)\,)\,.%
\end{equation}
\end{theorem}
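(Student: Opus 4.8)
The plan is to read the parabolic equation \eqref{NSC-velh}, for almost every fixed time $t$, as the stationary problem \eqref{NSC} with right-hand side $f(t)-\pa_t u(t)$, and then to apply the elliptic regularity of Proposition \ref{propmaq} slice by slice in $t$. Accordingly I would split the argument into two stages: first, two energy estimates producing \eqref{zeras}, \eqref{tercas} and \eqref{segas}, which in particular place the frozen datum in $L^2(\O)$ for a.e.\ $t$; second, the pointwise-in-$t$ elliptic estimate, raised to a suitable power and integrated in time, yielding \eqref{maxreg}. Every formal manipulation below I would justify on a Galerkin (or time-regularized) approximation and recover in the limit by weak lower semicontinuity.

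First I would record the two energy estimates. Testing \eqref{NSC-velh} with $u$ reproduces the a priori bound recalled in the Introduction; the decisive one comes from testing with $\pa_t u$. Since $(\m+|\nabla u|)^{p-2}\nabla u:\nabla\pa_t u=\pa_t\,\Psi(|\nabla u|)$, with $\Psi'(s)=(\m+s)^{p-2}\,s$, integration over $\O$ and Young's inequality give
\[
\tfrac12\int_\O|\pa_t u|^2\,dx+\frac{d}{dt}\int_\O\Psi(|\nabla u|)\,dx\leq\tfrac12\int_\O|f|^2\,dx .
\]
Integrating in $t$, using $u_0\in W^{1,p}_0(\O)$ and \eqref{effes}, and noting that $\Psi(s)$ is comparable to $s^{p}$ for large $s$, I obtain at once \eqref{tercas} and \eqref{zeras}. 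Feeding $\pa_t u\in L^2(0,T;L^2(\O))$ back into \eqref{NSC-velh} then shows that the divergence term $\nabla\cdot\big((\m+|\nabla u|)^{p-2}\nabla u\big)=\pa_t u-f$ belongs to $L^2(0,T;L^2(\O))$, which is \eqref{segas}.

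Next, set $F:=f-\pa_t u$, so that for a.e.\ $t$ the field $u(t)\in W^{1,p}_0(\O)$ solves \eqref{NSC} with datum $F(t)\in L^2(\O)$; since $\cq$ is chosen so that $r(\cq)=2$ (see \eqref{errq}) this datum lies exactly in the space demanded by Proposition \ref{propmaq} for the target $W^{2,\cq}(\O)$. The crucial feature is the power carried by the estimate. Expanding the divergence and dividing by the (possibly unbounded) coefficient gives, pointwise in $t$,
\[
\|D^2 u(t)\|_{\cq}\leq A\,\big\|\,|F(t)|\,(\m+|\nabla u(t)|)^{2-p}\big\|_{\cq}\leq A\,\|F(t)\|_{2}\,\big(1+\|D^2 u(t)\|_{\cq}\big)^{2-p},
\]
where the first inequality uses \eqref{ladaq} to absorb the term $(2-p)\|D^2 u(t)\|_{\cq}$ coming from the expansion --- this is precisely why the smallness condition \eqref{kkapas} keeps $A=C_2(\cq)/(1-(2-p)C_2(\cq))$ finite --- while the second uses H\"older together with the Sobolev embedding $W^{1,\cq}(\O)\subset L^{\cq^{*}}(\O)$, $\cq^{*}=\tfrac{n\,\cq}{n-\cq}$, the exponents matching precisely because $r(\cq)=2$. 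Since $2-p<1$, this sublinear inequality solves to $\|D^2 u(t)\|_{\cq}\lesssim 1+\|F(t)\|_{2}^{1/(p-1)}$ (homogeneously, with no additive constant, when $\m=0$). Raising to the power $2(p-1)$ and integrating,
\[
\int_0^T\|u(t)\|_{W^{2,\cq}(\O)}^{2(p-1)}\,dt\lesssim\int_0^T\big(1+\|F(t)\|_{2}^{2}\big)\,dt<\infty
\]
by \eqref{segas}, which is \eqref{maxreg}; this also explains why $2(p-1)$ is the natural time exponent.

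The hard part will not be any single estimate but the rigorous bookkeeping of the whole scheme: running the two energy estimates on an approximating sequence with constants uniform in the regularization parameter, checking that $t\mapsto\|u(t)\|_{W^{2,\cq}(\O)}$ is measurable, and passing to the limit in the nonlinear divergence term. A further delicate point, sharpest when $T=+\infty$, is to ensure that the lower-order contributions are controlled homogeneously by $\|F\|_{2}$ (as scaling guarantees when $\m=0$), so that the last time integral truly converges; this is where the hypotheses \eqref{bunov} and \eqref{kkapas}, which make Proposition \ref{propmaq} applicable in the range $\cq<2$, are really used.
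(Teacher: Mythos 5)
Your proposal is correct and takes essentially the same route as the paper: an energy estimate exploiting the identity $S(\nabla u):\nabla\pa_t u=\pa_t\Psi(|\nabla u|)$ (you test with $\pa_t u$, the paper tests with $-\nabla\cdot S(\nabla u)$, which by the equation gives exactly the same information, yielding \eqref{zeras}, \eqref{segas}, \eqref{tercas}), followed by the slicewise-in-time application of Proposition \ref{propmaq} to the stationary problem with datum $f-\pa_t u\in L^2(\O)$, raised to the power $2(p-1)$ and integrated, using $r(\cq)=2$. The only inessential difference is that you also sketch a re-derivation of the elliptic estimate (absorption via \eqref{ladaq} and \eqref{kkapas}, H\"older--Sobolev with matching exponents), which the paper simply cites from \cite{BVCRI}.
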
%

\vspace{0.2cm}

Note that, if $\,n=\,3\,$ and $\,p>\,\frac32\,$, then
$$
u\in
\,L^{2\,(p-\,1\,)}(0,\,\,T\,;\,C^{\,0,\,\alpha}(\overline{\Omega})\,)\,,
$$
where $\,\alpha=\,\frac{p-\,\frac32}{p-\,1}$.
\begin{remark}\label{emarq}
\rm{Due to \eqref{yud}, the condition \eqref{kkapas} holds if
\begin{equation}\label{bolas}%
(2-\,p) \cq <\,\frac{1}{K}\,,%
\end{equation}
at least by assuming $\,p > 2-\,\frac2n \,$ (as required by Yudovic
assumption on $n$, actually, not strictly necessary).
Further, straightforward calculations show that \eqref{bolas} holds if%
\begin{equation}\label{oras}%
2-\,\frac{n}{2nK+\,2}<\,p\leq \,2\,.
\end{equation}
So, \eqref{oras} by itself, is a sufficient condition to guarantee
the results claimed in theorem \ref{isadeo}. The main point is that
this condition depends only on $p\,$, via Yudovic's constant $K$.}
\end{remark}

\vspace{0.2cm}

Sharp estimates for the norms of the left hand sides of the above
equations, in terms of data norms and $\,\mu\,,$ follow immediately
from the proofs. For a more detailed discussion see sections
\ref{tres} and \ref{quatro}. We state here these estimates in the
singular case $\,\mu=\,0\,$. One has the following result:
\begin{theorem}\label{isadeo2}%
Consider the singular parabolic problem
\begin{equation}\label{NSC-velh-s}\left\{
\begin{array}{ll}\vspace{1ex}
\pa_t\,u -\,\nabla \cdot
\,(\,|\,\nabla\,u|^{p-2}\,\nabla\,u\,)=\,f(t,\,x)\,, \ \mbox{ in }
(0,\,T) \times\,\O\,,
\\%
u=\,0\ \mbox{ on } (0,\,T) \times\, \partial \O\,,
\\%
u(0)=\,u_0 \ \mbox{ in } \O\,.
\end{array}\right .
\end{equation}
Let the hypothesis assumed in Theorem \ref{isadeo}, concerning
$\,p\,$, $\,\cq\,,$ and $\,f\,,$ hold. Then, one has
\begin{equation}\label{primas2}
\frac2p \,\|\,\nabla \,u\,\|^p_{L^\infty (0,\,T;\,L^p(\O)\,)}\leq\,
\frac2p \, \|\,\nabla \,u_0\,\|^p_p + \,\|\,f\,\|^2_{L^2
(0,\,T;\,L^2(\O)\,)}\,,
\end{equation}%
\begin{equation}\label{segas2}
\|\,\nabla \cdot
\,(\,|\,\nabla\,u\,|^{p-2}\,\nabla\,u\,)\,\|^2_{L^2(0,\,T;\,L^2(\O)\,)}\leq\,
\frac2p \, \|\,\nabla \,u_0\,\|^p_p + \,\|\,f\,\|^2_{L^2
(0,\,T;\,L^2(\O)\,)}\,,
\end{equation}%
\begin{equation}\label{tercas2}%
\|\,\pa_t\,u\,\|^2_{L^2(0,\,T;\,L^2(\O)\,)}\leq\, \frac2p \,
\|\,\nabla \,u_0\,\|^p_p + \,2\,\|\,f\,\|^2_{L^2
(0,\,T;\,L^2(\O)\,)}\,,
\end{equation}
and
\begin{equation}\label{funds3}%
\ba{ll}\vs1\dy
\|\,u\,\|^2_{L^{2(p-\,1)}(\,0,\,T;\,W^{2,\,\cq}(\O)\,)\,} \leq\,C\,
T^{\frac{2-\,p}{p-\,1}}\Big(\,\|\,\nabla \,u_0\,\|^p_p +
\,\|\,f\,\|^2_{L^2 (0,\,T;\,L^2(\O)\,)}\,\Big)\,\\
\\
+\,C\,\Big(\,\|\,\nabla \,u_0\,\|^{\frac{p}{p-\,1}}_p +
\,\|\,f\,\|^{\frac{2}{p-\,1}}_{L^2 (0,\,T;\,L^2(\O)\,)}\,\Big)\,.%
\ea
\end{equation}
\end{theorem}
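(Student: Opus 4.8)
The plan is to run the energy method underlying Theorem~\ref{isadeo}, keeping every constant explicit and specialising to the singular case $\m=0$, and then to freeze the time variable and feed the resulting elliptic datum into the stationary regularity result in order to reach \eqref{funds3}. Throughout write $A:=|\nabla u|^{p-2}\nabla u$, so that \eqref{NSC-velh-s} reads $\pa_t u=f+\nabla\cdot A$.

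First I would establish \eqref{primas2}, \eqref{segas2} and \eqref{tercas2} from a single energy identity. Integrating by parts in space (legitimate since $u=0$, hence $\pa_t u=0$, on $\pa\O$) and using the chain rule $A:\nabla\pa_t u=\tfrac1p\,\pa_t|\nabla u|^p$ gives
\[
\int_\O \pa_t u\cdot(\nabla\cdot A)\,dx=-\frac1p\frac{d}{dt}\|\nabla u\|_p^p .
\]
Squaring the identity $\pa_t u=f+\nabla\cdot A$, integrating over $\O$ and inserting the relation above to eliminate the cross term $\int_\O f\cdot(\nabla\cdot A)$ produces the master identity
\[
\|\pa_t u\|_2^2+\|\nabla\cdot A\|_2^2+\frac2p\frac{d}{dt}\|\nabla u\|_p^p=\|f\|_2^2 .
\]
Integrating over $(0,t)$ and discarding, one at a time, the nonnegative terms on the left yields \eqref{primas2} (after taking the supremum in $t$), \eqref{segas2}, and \eqref{tercas2} (here even with constant $1$ in front of $\|f\|^2$, hence a fortiori with the stated $2$).

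Next, for \eqref{funds3} I would freeze the time. For a.e.\ $t$ the field $u(t)\in W^{1,p}_0(\O)$ solves the stationary problem \eqref{NSC} with datum $g(t):=f(t)-\pa_t u(t)=-\nabla\cdot A(t)$, which by \eqref{segas2} lies in $L^2(\O)$ for a.e.\ $t$ and has $\int_0^T\|g(t)\|_2^2\,dt$ already controlled. Because $\cq$ is calibrated so that $r(\cq)=2$ (see \eqref{errq}), the stationary regularity result, Theorem~\ref{teoremaq} in its $\cq<2$ extension (Proposition~\ref{propmaq}), applies with $q=\cq$ and $L^2(\O)\subset W^{-1,p'}(\O)$, and provides a pointwise-in-time bound of the form
\[
\|u(t)\|_{W^{2,\cq}(\O)}\le C\,\|g(t)\|_2^{\frac{1}{p-1}}+C\,\|g(t)\|_2 ,
\]
in which the first term carries the natural $(p-1)$-homogeneity of the $p$-Laplacian and the second is the lower-order contribution.

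It then remains to integrate in time. Raising the last inequality to the power $2(p-1)$ and integrating gives $\int_0^T\|u\|_{W^{2,\cq}}^{2(p-1)}\le C\int_0^T\|g\|_2^2+C\int_0^T\|g\|_2^{2(p-1)}$; the first integral is bounded by \eqref{segas2}, while for the second, since $p-1\in(0,1]$, H\"older in time with exponents $\frac1{p-1}$ and $\frac1{2-p}$ gives $\int_0^T\|g\|_2^{2(p-1)}\le T^{2-p}\big(\int_0^T\|g\|_2^2\big)^{p-1}$. Setting $M:=\frac2p\|\nabla u_0\|_p^p+\|f\|^2_{L^2(0,T;L^2)}$, this reads $\int_0^T\|u\|_{W^{2,\cq}}^{2(p-1)}\le CM+CT^{2-p}M^{p-1}$; raising to the power $\frac1{p-1}$ and using $(a+b)^{1/(p-1)}\le C(a^{1/(p-1)}+b^{1/(p-1)})$ yields the $T$-dependent first line of \eqref{funds3} from the H\"older term and, via $M^{1/(p-1)}\le C(\|\nabla u_0\|_p^{p/(p-1)}+\|f\|^{2/(p-1)})$, its second line. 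The two genuine difficulties are: (i) justifying the chain rule and the integration by parts behind the master identity in the degenerate/singular regime $\m=0$, $p<2$, which I would handle on the regularised problems ($\m=\ve\downarrow0$, or Galerkin) where all the constants above are uniform in $\m$, and then pass to the limit --- this is precisely why the estimates persist at $\m=0$; and (ii) securing the exact non-homogeneous form of the stationary estimate, whose two competing powers force both the two-line structure of \eqref{funds3} and the exponent $T^{(2-p)/(p-1)}$.
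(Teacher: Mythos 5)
Your proposal is correct and follows essentially the same route as the paper: the same energy identity (the paper obtains it by testing the equation with $-\,\nabla\cdot S(\nabla u)$ and writing the resulting term as $\tfrac12\tfrac{d}{dt}\int_\O G(|\nabla u|^2)\,dx$, which for $\mu=0$ is exactly your $\tfrac1p\tfrac{d}{dt}\|\nabla u\|_p^p$ identity), followed by freezing time, viewing $u(t)$ as the weak solution of the stationary problem with datum $f-\pa_t u=-\,\nabla\cdot S(\nabla u)\in L^2(\O)$, applying Proposition \ref{propmaq} (where the calibration $r(\cq)=2$ enters), and integrating in time. Your H\"older-in-time step $\int_0^T\|g\|_2^{2(p-1)}dt\leq T^{2-p}\bigl(\int_0^T\|g\|_2^2\,dt\bigr)^{p-1}$ is precisely the ``straightforward manipulation'' the paper leaves implicit in Section \ref{quatro} to produce the factor $T^{\frac{2-p}{p-1}}$ in \eqref{funds3}, and is in fact slightly sharper than the Young-type bound $\|g\|_2^{2(p-1)}\leq C(1+\|g\|_2^2)$ used to derive \eqref{funds2} in Proposition \ref{teoras}.
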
%
It is worth noting that, for $\,p=\,2\,$, the above estimates turn
into the classical "heat equation" estimates. For instance,
\eqref{funds3} reduces to
\begin{equation}\label{fundp2}%
\|\,u\,\|^2_{L^2(\,0,\,T;\,W^{2,\,2}(\O)\,)\,}
\leq\,C\,\big(\,\|\,\nabla \,u_0\,\|^2_2 +\,\|\,f\,\|^2_{L^2
(0,\,T;\,L^2(\O)\,)}\,\big)\,.%
\end{equation}

\vspace{0.2cm}

For related results we refer, for instance, to the well-know
monographs \cite{dibenedetto},  \cite{Ladsolural}, \cite{lions}, and
to references \cite{Chen}, \cite{Choe}, \cite{dibenedetto},
\cite{diben-fried 1}, \cite{diben-fried 2}, \cite{diben-kvong-ve},
\cite{diben-kvong} \cite{lieberman 2005}, \cite{lieberman 86},
\cite{lieberman 93}.\par%
In references  \cite{diben-fried 1}, \cite{diben-fried 2} (see
\cite{dibenedetto} chapters IX, X) local H\" older continuity in $\,
(0,\,T) \times \,\O\,$ of the space gradient of local weak solutions
is proved. Regularity results, \emph{up to the boundary}, are stated
in the chapter X of \cite{dibenedetto} (see the Theorems 1.1 and 1.2
therein). In particular, in the Theorem 1.2, the H\" older
continuity up to the parabolic boundary (where $\,u=\,0\,$) of the
spatial gradient of weak solutions $\,u\,$ is proved. However,
regularity results, up to the boundary, for the second order space
derivatives in $\,L^q(\O)\,$ spaces, for solutions to the parabolic
singular system \eqref{NSC-velh-s}, were not know in the literature.
Actually, the two types of estimates are not comparable. It is worth
noting that in the elliptic case, see \cite{BVCRI}, the
$\,W^{2,\,q}(\O)\,$ estimates imply
$\,C^{1,\,\alpha}(\overline{\O})\,$ regularity, since $\,q>\,n\,$ is admissible.\par%
A classical related subject, in the case $\,N=\,1\,,$ are the
Harnack's inequalities. See references and results in the recent
monograph \cite{diben-ugo-vesp}.  We learned in reference
\cite{dibenedetto} that the first parabolic versions of Harnack's
inequality are due to Hadamard \cite{hadamard} and Pini \cite{pini}.%

\vspace{0.2cm}

NOTATION: We follow the notation introduced in reference
\cite{BVCRI}. By $L^p(\O)$ and $W^{m,p}(\O)$, $m$ nonnegative
integer and $p\in(1,+\infty)$, we denote the usual Lebesgue and
Sobolev spaces, with the standard norms $\|\cdot\|_{p}$ and
$\|\,\cdot\,\|_{m,p}\,$. We set $\|\cdot\|=\|\cdot\|_{2}$. We denote
by $W^{1,p}_0(\O)$ the closure of $C^\infty_0(\O)$ in
$W^{1,p}(\O)\,,$ and by $W^{-1,p'}(\O)$, $p'=\,p/(p-1)$, the strong
dual of $W^{1,p}_0(\O)$ with norm $\|\,\cdot\,\|_{-1,p'}$.
\par%
The symbols $c$, $c_1$, $c_2$, etc., denote positive constants that
may depend on $\mu$; by capital letters, $C$, $C_1$, $C_2$, we
denote positive constants independent of $\mu \geq\,0\,$(eventually,
$\,\mu\,$ bounded from above). The same symbol $c$ or $C$ may denote
different constants, even in the same equation. we set $\partial_t
\,u=\,\frac{\pa\, u}{\pa\, t}\,.$

\section{The stationary problem. Known results.}
As already referred, a main ingredient used here to prove the core
estimate \eqref{maxreg} concerns the stationary problem \eqref{NSC}.
The following result was proved in reference \cite{BVCRI}, in
collaboration with Francesca Crispo (we also recall the previous
work \cite{BDVCRIplap}, by the same authors).
\begin{theorem}\label{teoremaq}
Let $p\in (1,2]$ and $q \geq\,2\,,$ $\,q\not= n\,,$ be given. Assume
that $\,(2-p)\,C_2(q)<\,1\,$, where $C_2(q)$ satisfies
\eqref{ladaq}. Further, assume that $\mu\geq 0$. Let $f\in
L^{r(q)}(\O)\,,$ where%
\be\label{rq}r(q)=\left\{\begin{array}{ll}\dy
\frac{nq}{n(p-1)+q(2-p)}
&\dy \mbox{ if }\ q\in [\,2,\,n]\,,\\
\hskip1cm q & \dy \mbox{ if }\ q \geq\, n\,,
\end{array}\right .\ee
and let $u$ be the unique weak solution of problem \eqref{NSC}. Then
$u$ belongs to $W^{2,q}(\O)$. Moreover, the following estimate holds
\begin{equation}\label{dnq}
 \|u\|_{2,q}\leq C
 \,\left(\|f\|_q+\|f\|_{r(q)}^\frac{1}{p-1}\right)\,.
\end{equation}
\end{theorem}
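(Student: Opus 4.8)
The plan is to reduce the quasilinear equation to a perturbed Poisson equation, for which the Calderón--Zygmund bound \eqref{ladaq} is available, and then to close a nonlinear a priori estimate by absorption. Writing the flux as $(\mu+|\nabla u|)^{p-2}\nabla u$ and differentiating componentwise, one obtains for each component $u^\alpha$ the pointwise identity
\begin{equation*}
\Delta u^\alpha = -\,\frac{f^\alpha}{(\mu+|\nabla u|)^{p-2}} \;-\; \frac{p-2}{|\nabla u|\,(\mu+|\nabla u|)}\sum_{j,\beta,k}\partial_j u^\alpha\,\partial_k u^\beta\,\partial_{jk}u^\beta .
\end{equation*}
The key structural observation is that the last term is a contraction of $D^2u$ against rank-one tensors built from the unit matrix $\nabla u/|\nabla u|$, so that two applications of Cauchy--Schwarz bound its Euclidean norm pointwise by $(2-p)\,\frac{|\nabla u|}{\mu+|\nabla u|}\,|D^2u|\le (2-p)|D^2u|$, uniformly in $\mu\ge 0$. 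I would first establish this identity and this bound rigorously on a nondegenerate approximation of the problem (for instance replacing $(\mu+|\nabla u|)^{p-2}$ by a smooth, uniformly elliptic coefficient and using the difference-quotient/Nirenberg method to place $u$ in $W^{2,q}$), so that all manipulations are legitimate before passing to the limit.

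Granting $u\in W^{2,q}(\Omega)\cap W_0^{1,q}(\Omega)$, I would apply \eqref{ladaq} to the identity above, obtaining
\begin{equation*}
\|D^2u\|_q \le C_2(q)\,\big\|\,f\,(\mu+|\nabla u|)^{2-p}\,\big\|_q + (2-p)\,C_2(q)\,\|D^2u\|_q .
\end{equation*}
Here the hypothesis $(2-p)C_2(q)<1$ is exactly what allows absorbing the last summand into the left-hand side, yielding $\|D^2u\|_q \le \frac{C_2(q)}{1-(2-p)C_2(q)}\,\|f\,(\mu+|\nabla u|)^{2-p}\|_q$. This absorption step is where the smallness condition is indispensable and is the conceptual heart of the argument.

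It remains to estimate the right-hand side. Using $(\mu+|\nabla u|)^{2-p}\le \mu^{2-p}+|\nabla u|^{2-p}$ (subadditivity, valid since $0\le 2-p\le 1$) splits the norm into a term $\mu^{2-p}\|f\|_q$ and $\||\nabla u|^{2-p}f\|_q$. For $q\in[2,n)$ I would apply H\"older with the pair $(q^*/(2-p),\,r(q))$, where $q^*=\tfrac{nq}{n-q}$, so that $\||\nabla u|^{2-p}f\|_q \le \|\nabla u\|_{q^*}^{\,2-p}\,\|f\|_{r(q)}$; a direct check shows that $\tfrac1q=\tfrac{2-p}{q^*}+\tfrac{1}{r(q)}$ forces precisely the value \eqref{rq} of $r(q)$, which explains its appearance. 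The Sobolev embedding $\|\nabla u\|_{q^*}\le C\|u\|_{2,q}$ then converts this into $C\,\|u\|_{2,q}^{\,2-p}\|f\|_{r(q)}$, while for $q>n$ one uses instead $\|\nabla u\|_\infty\le C\|u\|_{2,q}$ and $r(q)=q$. Combining with the Poincar\'e-type control $\|u\|_{2,q}\le C(\|D^2u\|_q+\|f\|_q)$ gives a nonlinear inequality $X\le C\,X^{2-p}\|f\|_{r(q)}+C\mu^{2-p}\|f\|_q$ with $X=\|u\|_{2,q}$; since $2-p<1$, Young's inequality with conjugate exponents $\big(\tfrac{1}{2-p},\tfrac{1}{p-1}\big)$ absorbs the sublinear term and produces \eqref{dnq}.

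The hard part will be the rigorous justification rather than the formal estimate: constructing the approximating sequence, proving that the bounds are uniform in the regularization parameter (and in $\mu$), and passing to the limit using the compactness furnished by the estimate itself together with the monotonicity of the operator, which also yields uniqueness of the weak solution. Two further technical points deserve care: the pointwise contraction bound must be verified in the genuinely vector-valued ($N$-system) setting, where the two Cauchy--Schwarz steps rely on $\nabla u/|\nabla u|$ having unit Frobenius norm; and the excluded value $q=n$, where $q^*$ degenerates, must be recovered separately, e.g.\ by running the argument for exponents on either side of $n$ and interpolating.
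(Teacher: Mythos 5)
Your proposal is correct and follows essentially the same route as the paper's proof of this theorem (which the paper defers to reference \cite{BVCRI}): the rewriting of the system as a Poisson equation perturbed by a term bounded pointwise by $(2-p)\,|D^2u|$, the absorption via the Yudovich bound \eqref{ladaq} under the smallness hypothesis $(2-p)\,C_2(q)<1$, the H\"older step that forces precisely the exponent $r(q)$ in \eqref{rq}, and the Young-inequality closure producing the exponent $\frac{1}{p-1}$ in \eqref{dnq} are exactly the argument given there. The only divergence is in the rigorous scaffolding: \cite{BVCRI} justifies the formal estimate for $\mu>0$ by a Schauder fixed-point argument on the set $\K=\{v\,:\,\|\Delta v\|_q\le R,\ v=0\ \mbox{on}\ \partial\O\}$ and then passes to the limit $\mu\to 0$ (as echoed in the paper's remarks around proposition \ref{propmaq}), rather than using your difference-quotient regularization, but this is an implementation detail rather than a different method.
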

The reader directly interested in the above result is refereed to
\cite{BVCRI}, where significance and range of
application of the above statement are discussed.\par%
In reference \cite{BVCRI} the authors assume that $\,q \geq\,2\,$
since they were mainly interested in maximal regularity. However
results and proofs hold also for values $\,q\,\in (1,\,2)\,,$ at
most under some small modification. Here we need the above result
only for the particular value $\,q=\,\cq<\,2\,$. For simplicity, we
take into account only this value, and show the single points in the
proof given in \cite{BVCRI} where some small remark
may be useful to adapt the proof to the value $\,\cq\,.$%
\begin{proposition}\label{propmaq}
Let be $\mu\geq 0\,,$ and let $\,p\,$ satisfy \eqref{kkapas} and
\eqref{bunov}. Assume that $f\in L^2(\O)\,.$ Then, the weak solution
$\,u\,$ to the problem \eqref{NSC} belongs to $W^{2,\cq}(\O)$.
Moreover,
\begin{equation}\label{dnq}
 \|u\|_{2,\cq}\leq C
 \,\left(\|f\|_\cq+\|f\|_2^\frac{1}{p-1}\right)\,.
\end{equation}
\end{proposition}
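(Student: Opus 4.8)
The plan is to re-run, at the single value $q=\cq$, the proof of Theorem \ref{teoremaq} given in \cite{BVCRI}, and to point out that each ingredient on which that proof rests survives the passage below the threshold $2$. First I would recall the structural step: setting $A=(\m+|\nabla u|)^{p-2}$ and expanding the divergence in \eqref{NSC}, the system takes the perturbed Poisson form
\[
-\Delta u=\frac{f}{A}+(p-2)\,\frac{\nabla|\nabla u|\cdot\nabla u}{\m+|\nabla u|},
\]
whose last term is bounded pointwise by $(2-p)\,|D^2u|$. This manipulation is purely algebraic and is insensitive to whether $\cq$ lies above or below $2$.

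Next I would invoke the Calderón--Zygmund inequality \eqref{ladaq}. The essential point to verify for $\cq<2$ is that Yudovic's bound \eqref{yud}, and hence \eqref{ladaq}, remains valid down to $q>\tfrac{2n}{n+2}$; since \eqref{bunov} forces $\cq$ into this range, and since the smallness hypothesis \eqref{kkapas} is assumed precisely at $q=\cq$, the absorption step
\[
\|D^2u\|_{\cq}\leq\frac{C_2(\cq)}{1-(2-p)\,C_2(\cq)}\,\Big\|\frac{f}{A}\Big\|_{\cq}
\]
goes through verbatim.

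It then remains to estimate $\|f/A\|_{\cq}=\|f\,(\m+|\nabla u|)^{2-p}\|_{\cq}$. Applying Hölder together with the Sobolev embedding $\nabla u\in L^{\cq^{*}}$ (available because $\cq<2\leq n$) reproduces exactly the exponent $r(q)$ of \eqref{rq} evaluated at $q=\cq$; by the defining relation \eqref{errq} one has $r(\cq)=2$, so the right-hand side is controlled by the assumed datum $f\in L^2(\O)$, and one recovers the estimate \eqref{dnq}. The compact immersion \eqref{doispp}, together with the inclusion $L^2(\O)\subset W^{-1,p'}(\O)$ guaranteed by \eqref{bunov}, ensures that the weak solution and the accompanying energy estimate, needed to close the continuation argument of \cite{BVCRI}, remain at our disposal.

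The main obstacle I anticipate is not in the chain above, which is formally identical to the case $q\geq 2$, but in confirming that every auxiliary passage of \cite{BVCRI} where $q\geq 2$ was tacitly used, in particular the duality arguments and the specific Sobolev inequalities employed in the right-hand side estimate, continues to hold below $2$. Concretely, one need only check that the Hölder exponents produced by the iteration stay admissible and that Yudovic's constant is finite throughout the relevant range, both of which follow from \eqref{bunov}; this is the content of the \emph{couple of comments} referred to in the Introduction.
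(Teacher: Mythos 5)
Your quantitative chain (the perturbed Poisson form, absorption of $(2-p)\,C_2(\cq)\,\|D^2u\|_{\cq}$ via \eqref{kkapas}, and the H\"older--Sobolev computation that lands on $r(\cq)=2$) is indeed the skeleton of the proof in \cite{BVCRI}, and you are right that these exponent counts survive below $q=2$. But that a priori estimate is not where the restriction $q\geq 2$ actually bites, and your closing paragraph misidentifies the obstacle as mere admissibility of H\"older exponents and finiteness of Yudovic's constant. The argument in \cite{BVCRI} is not a direct estimate on the weak solution (which is not known beforehand to lie in $W^{2,q}(\O)$); it constructs the regular solution by a fixed-point/approximation scheme in the set $\K$, and closing that scheme requires passing to the limit in the coefficient, i.e.\ proving the convergence \eqref{vais}. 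In \cite{BVCRI} this convergence is proved \emph{in the $L^{q/2}$ norm}, which makes sense there precisely because $q\geq 2$; for $q=\cq<2$ one has $\cq/2<1$ and this step collapses. This is exactly what the paper singles out as ``the unique real modification'' needed, and your proposal does not address it.

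The paper's fix, absent from your plan, runs as follows: the pointwise bound $|(\mu+|\nabla v^m|)^{2-p}-(\mu+|\nabla v|)^{2-p}|\leq \frac{2-p}{\mu^{p-1}}\,|\nabla v^m-\nabla v|$ gives convergence a.e.\ in $\O$; the sequence $(\mu+|\nabla v^m|)^{2-p}$ is bounded in $L^t$ with $t=\frac{\cq^{\,*}}{2-p}>\frac n2$, so Lemma 1.3, Chap.~1 of \cite{lions} yields weak convergence in $L^t$; and \eqref{bunov} guarantees $f\in L^{(\frac n2)'}(\O)$, so the product $f\,(\mu+|\nabla v^m|)^{2-p}$ converges in the sense of distributions, which, as remarked in \cite{BVCRI}, suffices. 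Without replacing the norm convergence by this distributional argument, your plan to re-run the proof ``verbatim'' stalls at the limit passage, so the proposal has a genuine gap even though its quantitative core is sound. (The paper's two remaining remarks --- that $\K\subset W^{1,p}_0(\O)$ because $p>\frac{2n}{n+2}$, and that $p<\cq^{\,*}$ so the approximations $u^\mu$ still converge to $u$ in $W^{1,p}(\O)$, which handles the singular case $\mu=0$ --- are closer in spirit to the routine checks you do propose.)
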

\begin{proof}
Clearly, we assume that the reader have in hands the proof of
theorem \ref{teoremaq} given in reference \cite{BVCRI}. The unique
real modification to be made in this proof, in order to adapt it to
the present situation, is the following. In reference \cite{BVCRI},
at the end of section 3, the authors prove the following convergence
\begin{equation}\label{vais}
f\left(\mu+|\nabla\,v^m|\right)^{2-p}\,\rightarrow
\,f\,\left(\mu+|\nabla\,v|\right)^{2-p}
\end{equation}
in the $\,L^{\frac{q}{2}}$ norm. This is not suitable here, since
$\,q=\,\cq <\,2\,.$ However, as remarked in \cite{BVCRI},
convergence in the distributional sense is obviously sufficient. As
in \cite{BVCRI}, one has
$$
|\,(\mu+|\nabla\,v^m|\,)^{2-p} -\,(\mu+|\nabla\,v|\,)^{2-p}\,|
\leq\,\frac{2-\,p}{\mu^{p-\,1}}\,|\nabla\,v^m -\,\nabla\,v|\,.
$$
In particular, it follows that \eqref{vais} holds a.e. in
$\,\O\,$.\par%
On the other hand, $\,\left(\mu+|\nabla\,v^m|\right)^{2-p}$ is
bounded in $\,L^t\,$, where
$\,t:=\,\frac{\cq*}{2-\,p}>\,\frac{n}{2}\,.\,$ So, it follows from
Lemma 1.3, in Chap.1, \cite{lions} that
$$
\left(\mu+|\nabla\,v^m|\right)^{2-p}\,\rightarrow
\,f\,\left(\mu+|\nabla\,v|\right)^{2-p}\,,%
$$
weakly in $\,L^t\,.$ Moreover, \eqref{bunov} implies that
$(\frac{n}{2})'\leq\,2\,.$ Consequently, $\,f \in
\,L^{(\frac{n}{2})'}\,.$
It readily follows that \eqref{vais} holds in the distributional sense.\par%
Just for the reader's convenience we add two (we believe
dispensable) remarks:\par%
i) The set
$$
\K=\{ v\in\,W^{2,\,\cq}(\O)\,:\,\| \Delta\,v\|_\cq \leq\,R\,,\, v=0\
\mbox{ on } \pa\O\}\,,
$$
introduced in \cite{BVCRI}, section 3, is still contained in
$\,W^{1,\,p}_0(\O)\,$, since $\,p\,>\frac{2\,n}{n+\,2}\,.$\par%
ii) As at the very beginning of section 4 in \cite{BVCRI}, one still
has here $\,p<\,q^*\,$. So, as in
\cite{BVCRI}, $\,u^\mu\,$ converges to $\,u\,$ in $W^{1,p}(\O)$.%
\end{proof}
\section{A more general setting.}\label{tres}
Besides proving the theorem \ref{isadeo}, we also want to show that
the statement may be extended to other systems of equations of the
form
\begin{equation}\label{NSC-nov}\left\{
\begin{array}{ll}\vspace{1ex}
\pa_t\,u -\,\nabla \cdot \,S(\,\nabla\,u\,) \,=\,f(t,\,x) \ \mbox{
in } (0,\,T\,) \times\,\O\,,
\\%
u=\,0\ \mbox{ on } (0,\,T) \times\, \partial \O\,,
\\%
u(0)=\,u_0 \ \mbox{ in } \O\,,
\end{array}\right .
\end{equation}
where $\,S(\cdot)\,$ is given by
\begin{equation}\label{SBSB}
 S(\,\nabla\,u\,):=\,B(\,|\nabla\,u|\,)\,\nabla\,u\,.
\end{equation}

Note that the $\,N-$dimensional vector field vector $ \nabla \cdot
\,S(\,\nabla\,u\,)$ has components given by

\begin{equation}\label{jote}
(\,\nabla \cdot \,S(\,\nabla\,u\,)\,)_j =\,\sum_i \,\pa_i
\big(\,B(\,|\nabla\,u|\,)\,\pa_i \,u_j \,\big)\,.
\end{equation}

In the sequel we show that the above extension is possible, provided
that proposition \ref{propmaq} applies to the corresponding
stationary problem
\begin{equation}\label{NSCnov}\left\{
\begin{array}{ll}\vspace{1ex}
-\,\nabla \cdot \,\big(\,B (\,|\nabla\,u|\,)\,\nabla\,u\,\big)=\,f \
\mbox{ in } \O\,,
\\%
u=\,0\ \mbox{ on } \partial \O\,.
\end{array}\right .
\end{equation}
This last possibility was, rightly, claimed in reference
\cite{BVCRI}. So, in this section, we consider the system
\eqref{NSC-nov}, and \emph{assume} that a suitable extension of
proposition \ref{propmaq} to the system \eqref{NSCnov} holds.
Further, we introduce the following notation, suited to treat the
general situation.\par%
We set, for $\,y >\,0\,,$
\begin{equation}\label{AAAs}
A(y):=\,B(\sqrt{y}\,)\,,
\end{equation}
and define, for $\,y\geq\,0\,,$
\begin{equation}\label{estrla}
G(y):= \int \,A(y)\,dy\,.
\end{equation}
Furthermore, we assume that there are positive constants $\,c_0\,$
and  $\,c_1\,$ such that
\begin{equation}\label{assumg}
c_0\,y^p -\,c_1 \leq \,G(y^2) \leq\,\tc_0\,y^p +\,\tc_1\,,
\end{equation}
for $\,y\geq\,0\,.$\par%
In this section we show the following result.
\begin{proposition}\label{teoras}%
Assume that the solutions to the stationary problem \eqref{NSCnov}
enjoy the regularity result stated in proposition \ref{propmaq}, and
that \eqref{assumg} holds. Further, assume that
$\,f\in\,L^2(0,\,T;\,L^2(\O)\,)\,$, for some
$\,T \in \,]0,\,+\,\infty\,]\,.$ Then%
\begin{equation}\label{primas}
c_0\,\|\,u\,\|^p_{L^\infty (0,\,T;\,W^{1,\,p}_0(\O)\,)}\leq\,\tc_0
\,\|\,u_0\,\|^p_{\,W^{1,\,p}_0(\O)}+\,\|\,f\,\|^2_{L^2
(0,\,T;\,L^2(\O)\,)}+\,(c_1+\,\tc_1)\,|\,\O\,|\,,
\end{equation}%
\begin{equation}\label{segasg}
\|\,\nabla\,\cdot\,S(\,\nabla\,u\,)\,\|^2_{L^2(0,\,T;\,L^2(\O)\,)}\leq\,\tc_0
\,\|\,u_0\,\|^p_{\,W^{1,\,p}_0(\O)}+\,\|\,f\,\|^2_{L^2
(0,\,T;\,L^2(\O)\,)}+\,(c_1+\,\tc_1)\,|\,\O\,|\,,
\end{equation}%
\begin{equation}\label{tercasg}%
\|\,\pa_t\,u\,\|^2_{L^2(0,\,T;\,L^2(\O)\,)}\leq\,\tc_0
\,\|\,u_0\,\|^p_{\,W^{1,\,p}_0(\O)}+\,2\,\|\,f\,\|^2_{L^2
(0,\,T;\,L^2(\O)\,)}+\,(c_1+\,\tc_1)\,|\,\O\,|\,,
\end{equation}
and
\begin{equation}\label{funds2}
\|\,u\,\|_{L^{2(p-\,1)}_T(W^{2,\,\cq}(\O)\,)} \leq\,C\Big(\,
T^{\frac{1}{2(p-\,1)}}+\, \tc_0^{\,\frac{1}{2(p-\,1)}}
\,\|\,u\,\|^{\frac{p}{2(p-\,1)}}_{\,W^{1,\,p}_0(\O)}+\,
\end{equation}
$$
\|\,f\,\|^{\frac{1}{(p-\,1)}}_{L^2_T(L^2(\O))}+\,\big(\,(c_1+\,\tc_1)\,|\,\O\,|\,\big)^{\frac{1}{2(p-\,1)}}\,\Big)\,.
$$
\end{proposition}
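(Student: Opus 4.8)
The plan is to establish the four estimates by combining the stationary regularity result (Proposition~\ref{propmaq}) with a standard energy method, where the crucial observation is that the singular ellipticity is absorbed by testing the equation against $\pa_t u$ rather than against $u$. First I would derive the basic energy identity. Multiplying the evolution equation \eqref{NSC-nov} by $\pa_t u$ and integrating over $\O$, the diffusion term becomes
\begin{equation*}
-\int_\O \big(\nabla\cdot S(\nabla u)\big)\cdot \pa_t u \,dx = \int_\O B(|\nabla u|)\,\nabla u : \pa_t \nabla u\,dx = \frac{d}{dt}\int_\O G(|\nabla u|^2)\,dx,
\end{equation*}
where the last equality uses the definitions \eqref{AAAs}--\eqref{estrla} and the chain rule $\frac{d}{dt}G(|\nabla u|^2) = A(|\nabla u|^2)\,\frac{d}{dt}|\nabla u|^2 = 2\,B(|\nabla u|)\,\nabla u : \pa_t\nabla u$. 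This yields the pointwise-in-time identity $\|\pa_t u\|_2^2 + \frac{d}{dt}\int_\O G(|\nabla u|^2)\,dx = \int_\O f\cdot\pa_t u\,dx$. Integrating in $t$, applying Cauchy--Schwarz and Young on the right, and using the lower/upper bounds \eqref{assumg} to convert $\int_\O G(|\nabla u|^2)\,dx$ into $\|\nabla u\|_p^p$ (up to the additive $|\O|$ terms), I obtain simultaneously \eqref{primas}, \eqref{tercasg}, and — since $\nabla\cdot S(\nabla u) = \pa_t u - f$ — also \eqref{segasg}.

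Second I would pass to the second-derivative estimate \eqref{funds2}. For each fixed $t$, the function $u(t)$ solves the stationary problem \eqref{NSCnov} with right-hand side $g(t):=\pa_t u(t) - f(t) = -\nabla\cdot S(\nabla u(t)) \in L^2(\O)$. By the assumed extension of Proposition~\ref{propmaq} to system \eqref{NSCnov}, this gives
\begin{equation*}
\|u(t)\|_{2,\cq} \leq C\Big(\|g(t)\|_\cq + \|g(t)\|_2^{\frac{1}{p-1}}\Big).
\end{equation*}
The core exponent $\cq<2$ from \eqref{errq2} is exactly what makes $L^2(\O)\subset L^\cq(\O)$ usable, and via \eqref{errq} the natural integrability of $g$ fits the hypothesis of the stationary theorem. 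The remaining task is to raise this to a suitable power in $t$ and integrate, so that the right-hand side is controlled by the already-established space-time $L^2$ bounds on $g$.

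The main obstacle, and the reason the time-exponent $2(p-1)$ appears, lies in handling the term $\|g(t)\|_2^{1/(p-1)}$. I expect to raise the stationary estimate to the power $2(p-1)$ and integrate over $(0,T)$: the contribution from $\|g(t)\|_2^{1/(p-1)}$ then produces $\int_0^T \|g(t)\|_2^2\,dt$, which is finite by \eqref{tercasg} and \eqref{segasg}, while the $\|g(t)\|_\cq$ contribution requires passing from the $L^\cq$-in-space norm back to the $L^2$-in-space norm (costing a constant by $\cq<2$ and H\"older on the bounded domain $\O$) and then applying H\"older in time with the exponent $2(p-1)\leq 2$; this is where the factor $T^{\frac{1}{2(p-1)}}$ (equivalently the $T^{\frac{2-p}{p-1}}$ in \eqref{funds3}) is generated. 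The bookkeeping of which power of which norm lands in each summand of \eqref{funds2} is the delicate part, but it is routine once the two building blocks — the energy identity and the stationary estimate — are in place; I would simply collect terms, use the subadditivity $(a+b)^\theta\leq a^\theta+b^\theta$ for $\theta\leq 1$, and substitute the bounds \eqref{primas}--\eqref{tercasg} to finish.
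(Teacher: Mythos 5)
Your proposal is correct and takes essentially the same route as the paper: the paper derives the very same energy identity by testing the equation with $-\,\nabla\cdot S(\nabla u)$ instead of $\partial_t u$ (the two multipliers are interchangeable through the equation $\partial_t u-\nabla\cdot S(\nabla u)=f$ and yield the same bounds for $\int_\O G(|\nabla u|^2)\,dx$, $\|\partial_t u\|_{L^2(L^2)}$ and $\|\nabla\cdot S(\nabla u)\|_{L^2(L^2)}$), and then, exactly as you do, applies the stationary estimate of Proposition \ref{propmaq} at a.e.\ time slice with right-hand side $f-\partial_t u$, raises it to the power $2(p-1)$ and integrates in time, handling the sub-quadratic power by the pointwise bound $a^{2(p-1)}\le 1+a^2$ where you use H\"older in time plus Young — both produce the same $T^{\frac{1}{2(p-1)}}$ term. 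The only slip is the harmless missing factor $\tfrac12$ in your first display: by your own chain-rule computation one has $\int_\O B(|\nabla u|)\,\nabla u:\partial_t\nabla u\,dx=\tfrac12\,\frac{d}{dt}\int_\O G(|\nabla u|^2)\,dx$, which only shifts the numerical constants.
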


\begin{proof}
By scalar multiplication of booth sides of the first equation
\eqref{NSC-nov} by $-\,\nabla \cdot
\,\big(\,B(|\,\nabla\,u|)\,\nabla\,u\,\big)$, followed by
integration in $\,\O\,$, one gets
\begin{equation}\label{interm}
\ba{ll}\vs1\dy \frac12\,\int_\O \,B(|\,\nabla\,u|) \,\pa_t
\,|\,\nabla\,u|^2\, dx
+ \,\int_\O \,|\,\nabla \cdot \,S(\,\nabla\,u\,)\,|^2 \,
dx \,=\\
\\
-\,\int_\O \, f \cdot\,(\,\nabla \cdot \,S(\,\nabla\,u\,) \,)
\,dx\,.\ea
\end{equation}
Recall \eqref{jote}. We have appealed to an integration by parts and
to the fact that $\,\pa_t\,u =\,0\,$ on $\, \partial \O\,.$ Next, we
write the equation \eqref{interm} in the form
\begin{equation}\label{interm2}
\ba{ll}\vs1\dy \frac12\,\frac{d}{d\,t}\, \int_\O
\,G(|\,\nabla\,u|^2)\, dx + \,\int_\O \,|\,\nabla \cdot
\,S(\,\nabla\,u\,)\,|^2 \,
dx \,=\\
\\
-\,\int_\O \, f \cdot\,(\,\nabla \cdot \,S(\,\nabla\,u\,)\,)
\,dx\,,\ea
\end{equation}
Note that, if $\,f=\,0\,,$ the quantity
$$
\int_\O \,G(|\,\nabla\,u(t)|^2)\, dx
$$
is decreasing with respect to time. For instance, in the singular
case \eqref{NSC-velh-s}, the norm $\,\|\,\nabla \,u(t)\,\|_p\,$
decreases with time.\par%
From \eqref{interm2}, it follows that
\begin{equation}\label{interm3}
\frac{d}{d\,t}\, \int_\O \,G(|\,\nabla\,u|^2)\, dx + \,\int_\O
\,\big|\,\nabla \cdot \,\big(\,A(|\,\nabla\,u|^2)
\,\nabla\,u\,\big)\,\big|^2 \, dx \leq\, \int_\O \, |\,f\,|^2
\,dx\,.
\end{equation}
By integration with respect to $\,t\,$, one gets
\begin{equation}\label{itempo}
 \int_\O \, G(\,|\,\nabla\,u(t)|^2) dx +\,\int_0^t \,\big\|\,\nabla \cdot
\,S(\,\nabla\,u(s)\,)\,\big\|^2_2 \,ds\leq\, \int_\O \,
G(\,|\,\nabla\,u_0|^2) dx +\, \int_0^t \, \|\,f(s)\,\|^2_2 \,ds\,.
\end{equation}
From \eqref{itempo} and \eqref{assumg}, by appealing to well know
manipulations, one proves \eqref{primas}, \eqref{segasg}, and
\eqref{tercasg}. Note that \eqref{tercasg} follows immediately from
the identity
$$
\pa_t\,u =\,\nabla \cdot\,S(\,\nabla\,u\,)+\,f(t,\,x) \,.
$$
In the above estimates $\,W^{1,\,p}_0(\O)\,$ is endowed with the
norm $\,\|\,\nabla\,u\,\|_{\,L^p(\O)}\,.$\par%
Finally, we prove \eqref{funds2}. By assumption, the solutions to
the stationary problem \eqref{NSCnov} enjoy the regularity results
stated in proposition \ref{propmaq}. So, it follows from equations
\eqref{NSC-nov}, \eqref{dnq}, \eqref{segasg}, and \eqref{tercasg}
that
\begin{equation}\label{uns}%
u \in\,L^{2(\,p-\,1\,)}(0,\,T\,; W^{2,\,\cq}(\O)\,)\,.
\end{equation}%
More precisely, by appealing to \eqref{dnq}, one gets, for a.a.
$t\in (0,\,T)\,,$
$$
\|\,u(t)\|^{2(p-\,1)}_{2,\,\cq} \leq\,C\,\big(\,\|\,\pa_t\,u
-\,f\,\|^{2(p-\,1)}_2+\,\|\,\pa_t\,u -\,f\,\|^2_2\,\big)\,.
$$
Note that, for $\,p=\,2\,$, we get the classical estimate.\par%
Since $\,2(\,p-\,1\,) \leq\,2\,,$ we may replace the above estimate
simply by
$$
\|\,u(t)\|^{2(p-\,1)}_{2,\,\cq} \leq\,C\,\big(\,1+\,\|\,\pa_t\,u
-\,f\,\|^2_2\,\big)\,.
$$
So, with obvious notation,
\begin{equation}\label{funds}
\|\,u\,\|_{L^{2(p-\,1)}_T(W^{2,\,\cq}\,)} \leq\,
C\,T^{\frac{1}{2(p-\,1)}}+\,C\,\|\,\pa_t\,u
-\,f\,\|^{\frac{1}{p-\,1}}_{L^2_T(L^2\,)}\,.
\end{equation}
Hence, by the assumptions on $\,f\,$ together with \eqref{tercas},
one gets \eqref{funds2}. This completes the proof of proposition
\ref{teoras}.\par%
\end{proof}
Note that proposition \ref{teoras} is only a \emph{partial
extension} of theorem \ref{isadeo} to more general systems of the
form \eqref{NSC-nov} since, in this last case the proposition
\ref{propmaq} was not proved. However, the corresponding extension
should be routine.\par%
\section{Proof of Theorem  \ref{isadeo}.}\label{quatro}
To prove the theorem \ref{isadeo}, we simply assume that $\,B\,$ is
given by
\begin{equation}\label{svelho}
B(|\,\nabla\,u|)=\,(\,\m+|\,\nabla\,u|\,)^{p-2}\,.
\end{equation}
It remains to show that, in the case of equation \eqref{NSC-velh},
an estimate like \eqref{assumg} holds.
\begin{lemma}
Set, for $\,y \geq\,0\,$,
\begin{equation}\label{asinhas}
A(\,y)=\,(\,\m+\,y^\frac12 \,)^{p-2}\,.%
\end{equation}%
It follows that
\begin{equation}\label{ges}%
G(\,y^2)= \,\frac{2}{p} \,(\,\m+\,y \,)^p
-\,\frac{2\,\m}{p-\,1}\,(\,\m+\,y \,)^{p-\,1}\,.
\end{equation}
in particular
\begin{equation}\label{ges}%
\frac{1}{p} \,(\,\m+\,y \,)^p -\,C_1\,\mu^2 \leq\,G(y^2) \leq
\,\frac{2}{p} \,(\,\m+\,y \,)^p
\leq\,\frac{2^p}{p}\,(\,y^p+\,\mu^p\,)\,,
\end{equation}
where $ C_1\,=\,\frac{2^p}{p\,(p-\,1)}\,.$
\end{lemma}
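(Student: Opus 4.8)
The plan is to first compute the antiderivative $G$ in closed form and then read off the two-sided bound from it. To obtain $G$ I would substitute $t=\sqrt{y}$ in $G(y)=\int A(y)\,dy=\int(\mu+\sqrt{y})^{p-2}\,dy$, which turns the integral into $2\int t\,(\mu+t)^{p-2}\,dt$. Writing $t=(\mu+t)-\mu$ splits this into $2\int(\mu+t)^{p-1}\,dt-2\mu\int(\mu+t)^{p-2}\,dt$, two elementary power integrals. Carrying them out, substituting back $t=\sqrt{y}$, and finally evaluating at $y^2$ (so that $\sqrt{y^2}=y$ for $y\ge 0$) yields exactly
\[
G(y^2)=\frac{2}{p}\,(\mu+y)^p-\frac{2\mu}{p-1}\,(\mu+y)^{p-1},
\]
the constant of integration being fixed by this choice. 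This is the announced identity for $G(y^2)$.

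For the upper estimate I would simply discard the subtracted term, which is nonnegative because $\mu\ge 0$, $y\ge 0$ and $p>1$, giving $G(y^2)\le \tfrac{2}{p}(\mu+y)^p$. The last inequality $\tfrac{2}{p}(\mu+y)^p\le \tfrac{2^p}{p}(y^p+\mu^p)$ then follows at once from the convexity estimate $(\mu+y)^p\le 2^{\,p-1}(\mu^p+y^p)$, valid for $p\ge 1$.

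The only substantial point is the lower estimate, where the negative cross term has to be absorbed into $\tfrac1p(\mu+y)^p$ at the cost of a pure power of $\mu$. Setting $z=\mu+y$, I would bound the cross term by Young's inequality with conjugate exponents $p'=p/(p-1)$ and $p$ and a free parameter $\delta>0$, using $(p-1)p'=p$:
\[
\frac{2\mu}{p-1}\,z^{p-1}\le \frac{2\delta^{p'}}{p}\,z^{p}+\frac{2\delta^{-p}}{p(p-1)}\,\mu^{p}.
\]
Choosing $\delta^{p'}=\tfrac12$, hence $\delta^{-p}=2^{\,p-1}$, makes the coefficient of $z^p$ equal to $\tfrac1p$ and produces the constant $C_1=\tfrac{2^{p}}{p(p-1)}$ in front of $\mu^{p}$. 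Subtracting from $G(y^2)=\tfrac2p z^p-\tfrac{2\mu}{p-1}z^{p-1}$ then leaves precisely $\tfrac1p(\mu+y)^p-C_1\mu^{p}$. Thus the main (and essentially only) obstacle is to select the Young parameter so that the splitting reproduces the announced constant; everything else is a routine integration. I would also remark that the exponent of $\mu$ in the stated lower bound should read $p$ rather than $2$: both sides of the inequality $\tfrac1p(\mu+y)^p-C_1\mu^{p}\le G(y^2)$ are positively homogeneous of degree $p$ in $(\mu,y)$, and with $\mu^{p}$ the constant $C_1=\tfrac{2^{p}}{p(p-1)}$ emerges exactly, whereas $\mu^2$ would break the homogeneity.
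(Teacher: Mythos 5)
Your proof is correct. The only step where it differs from the paper's is the lower bound: you absorb the cross term $\frac{2\mu}{p-1}z^{p-1}$ via Young's inequality with a free parameter, tuned so that the $z^p$-coefficient equals $\frac1p$, whereas the paper writes $\frac2p z^p-\frac{2\mu}{p-1}z^{p-1}=\frac1p z^p+\bigl(\frac1p z^p-\frac{2\mu}{p-1}z^{p-1}\bigr)$ and minimizes the bracket by calculus, the minimum being attained at $z=2\mu$ with value $-\frac{2^p}{p(p-1)}\mu^p$. These are two presentations of the same absorption — your choice $\delta^{p'}=\frac12$ makes the Young step an equality precisely at $z=2\mu$ — and both produce the same constant $C_1=\frac{2^p}{p(p-1)}$; the explicit computation of $G$, the upper bound by discarding the nonnegative subtracted term, and the convexity inequality $(\mu+y)^p\le 2^{p-1}(\mu^p+y^p)$ are exactly what the paper leaves implicit. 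Your closing remark is also correct and worth keeping: the paper's own minimization yields $-C_1\mu^p$, not $-C_1\mu^2$, and with $\mu^2$ the stated lower bound would in fact fail for $p<2$ and $0<\mu<1$ (at $z=2\mu$ the deficit is $C_1\mu^p>C_1\mu^2$ there), so the exponent $2$ in the lemma, and in the paper's later choice $c_1=C_1\mu^2$, should indeed read $p$.
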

The second inequality \eqref{ges} follows by setting $z=\,\m+\,y
\,,$where $\,z \geq\,0\,,$ and by writing
$$
\frac{2}{p}\,z^p -\,\frac{2\,\m}{p-\,1}\,z^{p-\,1}=\,
\frac{1}{p}\,z^p +\,\big(\,\frac{1}{p}\,z^p
-\,\frac{2\,\m}{p-\,1}\,z^{p-\,1}\,\big)\,.
$$
The minimum of the function between open brackets is attained for
$\,z=\,2\,\mu\,$.\par%

It readily follows that the estimates \eqref{primas},
\eqref{segasg}, \eqref{tercasg}, and \eqref{funds2} hold by setting,
for instance,

$$
 c_0=\,\frac1p\,, \hspace{0.2cm}  c_1=\,C_1\,\mu^2\,,\hspace{0.2cm} \tc_0=\,
\tc_1=\,\frac{2^p}{p}\,.
$$
In the singular case \eqref{NSC-velh-s} one has $\,G(\,y^2)=
\,\frac{2}{p}\,y^p\,.$ Straightforward manipulations lead to the
estimates claimed in the theorem \ref{isadeo2}.

\end{document}